\apptocmd{\thebibliography}{\raggedright}{}{}
\newtheorem{theorem}{Theorem}[section]
\newtheorem{lemma}[theorem]{Lemma}
\newtheorem{proposition}[theorem]{Proposition}
\newtheorem{claim}[theorem]{Claim}
\newtheorem{non-example}{Non-example}[section]
\title{On the torsion in the cohomology of the integral structure sheaf of affinoid adic spaces}
\author{Emiliano Torti \\ Universit\'e de la Polyn\'esie Fran\c{c}aise\\ torti.emiliano@gmail.com}
\begin{document}

\maketitle

\begin{abstract}\noindent
We prove that the cohomology of the integral structure sheaf of a normal affinoid adic space over a non-archimedean field of characteristic zero is uniformly torsion. This result originated from a remark of Bartenwerfer around the 1980s and it partially answers a recent question of Hansen and Kedlaya (see also Problems 27 and 39 in the Non-Archimedean Scottish Book). 
\end{abstract}


\begin{section}{Introduction}
Let $\mathbb{K}$ be a complete non-archimedean field of characteristic zero and denote by $\mathbb{K}^\circ$ (resp. $\mathfrak{m}$) the subring of integral elements (resp. the maximal ideal of topologically nilpotent elements). Let $(A, A^+)$ be sheafy a uniform Tate (complete) Huber pair over $\mathbb{K}$ and denote by $I$ the ideal of definition of $A^+$. Let $X:=\text{Spa}(A, A^+ )$ be its attached affinoid adic space over $\mathbb{K}$. Denote by $\mathcal{O}_X$ (resp. $\mathcal{O}_X^+$ ) its structure sheaf (resp. integral structure subsheaf). Kedlaya and Liu (see Theorem 2.4.23 in \cite{KL15}) showed that the sheaf cohomology of the structure sheaf satisfies $H^q (X , \mathcal{O}_X )=0$ for all $q\geq 1$, i.e. the structure sheaf $\mathcal{O}_X$ is acyclic. This property however does not hold in general if we replace $\mathcal{O}_X$ by the integral structure subsheaf $\mathcal{O}_X^+$, see the example 6.6 in \cite{HK25}. In accordance with Hansen and Kedlaya (see \cite{HK25}), we say that an $A^+$-module $M$ is \textit{torsion} (resp. \textit{uniformly torsion}) if every element is killed by a power of $I$ (resp. $I^n M$=0 for some $n\in \mathbb{Z}_{\geq 1}$). Moreover, following Gabber and Ramero (see \cite{GR03}), we say that a $\mathbb{K}^\circ$-module $M$ is \textit{almost zero} if $\mathfrak{m} M=0$. The following result, which summarizes the state of the art on the known properties of the groups $H^* (X , \mathcal{O}_X^+ )$, is the starting point of our investigation:
\begin{theorem}[Bartenwerfer, Luktebohmert, Hansen-Kedlaya]
Let $(A, A^+)$ be a affinoid algebra topologically of finite type over a non-archimedean field $\mathbb{K}$ of characteristic zero, and let $X=\text{Spa}(A, A^+ )$ its attached affinoid adic space. Then we have the following: \\
(I) $A$ is semi-normal  if and only $H^1 (X, \mathcal{O}_X^+)$ is uniformly torsion,\\
(II) if $A$ is smooth then $H^q (X, \mathcal{O}_X^+ )$ is uniformly torsion for all $q>0$,\\
(III) if $X$ admits a smooth formal model, then $H^q ( X , \mathcal{O}_X^+ )=0 $ for all $q>0$.
\end{theorem}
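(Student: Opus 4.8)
The plan is to isolate what the three parts have in common and then treat them in turn. Fix a topologically nilpotent unit $\varpi\in A$ with $I=\varpi A^{+}$. Then $\mathcal O_X=\mathcal O_X^{+}[\varpi^{-1}]$, and since sheaf cohomology on the spectral space $X$ commutes with filtered colimits, for every $q\ge 1$
\[
H^q(X,\mathcal O_X^{+})[\varpi^{-1}]\;\cong\;H^q(X,\mathcal O_X)\;=\;0
\]
by the Kedlaya--Liu vanishing recalled above. Thus every $H^q(X,\mathcal O_X^{+})$ with $q\ge1$ is automatically $\varpi$-power-torsion, hence torsion; what remains to be shown is the \emph{vanishing} in (III) and the \emph{uniformity} of the torsion in (I) and (II).

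For (III) I would start from a smooth formal model $\mathfrak X/\mathbb K^{\circ}$ with special fibre $\mathfrak X_0$ and use the comparison between the analytic and the formal structure sheaves to identify $R\Gamma(X,\mathcal O_X^{+})$, up to bounded $\varpi$-torsion, with $R\!\lim_n R\Gamma\bigl(\mathfrak X,\mathcal O_{\mathfrak X}/\varpi^{n}\bigr)$. Since $\mathcal O_{\mathfrak X}$ is $\mathbb K^{\circ}$-flat, the reductions carry no higher $\varpi$-torsion, so $\mathcal O_{\mathfrak X}/\varpi^{n}$ is a successive extension of copies of $\mathcal O_{\mathfrak X_0}$ and the problem reduces to $H^q(\mathfrak X_0,\mathcal O_{\mathfrak X_0})=0$ for $q\ge1$; this I would deduce from the (affine, or tree-like) structure of the special fibre of a formal model of an affinoid, as in Bartenwerfer's treatment of the good-reduction case. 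The Mittag-Leffler property of the $H^0$-tower then forces $R\!\lim$ to be concentrated in degree $0$, giving the vanishing.

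For (I), where $A$ is only reduced, I would let $\widetilde A$ be the normalisation --- finite over $A$ because affinoid algebras are excellent --- set $\widetilde X=\operatorname{Spa}(\widetilde A,\widetilde A^{\circ})$ with its finite morphism $\pi\colon\widetilde X\to X$, and let $\mathfrak c\subseteq A$ be the conductor. Bartenwerfer's normal-case result, applied to the finitely many normal components of $\widetilde X$, makes $H^q(\widetilde X,\mathcal O_{\widetilde X}^{+})$ uniformly torsion; the conductor square relating $A,\widetilde A,A/\mathfrak c,\widetilde A/\mathfrak c$ and its analytic analogue then show that $\mathcal O_X^{+}$ and $\pi_*\mathcal O_{\widetilde X}^{+}$ differ, on each rational subdomain of $X$, by a subquotient of the integral part of $\widetilde A/A$. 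This subquotient is annihilated by one fixed power of $I$ simultaneously on all rational subdomains exactly when $A$ equals its seminormalisation inside $\widetilde A$, i.e.\ exactly when $A$ is seminormal: when it is, Bartenwerfer's extension theorem (Fortsetzungssatz) closes the gap with a bounded denominator and transports uniform torsion down from $\widetilde X$ to $X$; when it is not, the same failure produces classes with no uniform annihilator already in $H^1$ --- for instance the cuspidal cubic $\mathbb K\langle x,y\rangle/(y^2-x^3)$, which is reduced but not seminormal, cf.\ \cite{HK25}.

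The hardest part, I expect, is (II). A smooth affinoid need not admit a smooth formal model (a sufficiently wide annulus already fails) and, unlike in (I), there is no finite normalisation to compare against, so the \emph{uniform} bound has to be squeezed out of the internal structure of the algebra itself. Here I would invoke L\"utkebohmert's structure theory of smooth affinoids to produce a formal model $\mathfrak X$ whose deviation from $\mathbb K^{\circ}$-flatness --- equivalently, the higher $\varpi$-torsion in all the $\mathcal O_{\mathfrak X}/\varpi^{n}$ --- is killed by a single power $\varpi^{c}$ with $c$ depending only on $X$. Feeding this into the comparison of (III), the terms that vanished there now survive only as $\varpi^{c}$-torsion, and chasing them through the finite spectral sequence computing $H^q(X,\mathcal O_X^{+})$ (finite because $X$ is affinoid and $\mathfrak X$ quasi-compact) yields a single integer $N$ with $I^{N}H^q(X,\mathcal O_X^{+})=0$ for all $q\ge1$. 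Making L\"utkebohmert's control of integral functions under rational localisation explicit enough to bound the \v{C}ech differentials \emph{uniformly}, rather than merely element by element, is the delicate point --- and is presumably what forces the hypothesis from ``normal'' down to ``smooth''.
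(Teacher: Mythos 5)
The first thing to say is that the paper does not prove this theorem at all: it is a survey statement, with (II) and (III) attributed to Bartenwerfer and L\"utkebohmert (Folgerung 3 in \cite{Bar79}, Theorem 2 in \cite{Bar78}, \cite{Luk22}, \cite{KST19}) and (I) to Hansen--Kedlaya (Theorem 10.3 in \cite{HK25}), so your sketch has to stand entirely on its own. Your opening reduction is correct and worth keeping: since $X$ is spectral and $\mathcal O_X=\mathcal O_X^+[\varpi^{-1}]$, Kedlaya--Liu acyclicity gives that every $H^q(X,\mathcal O_X^+)$, $q\ge 1$, is $\varpi$-power torsion, and the entire content is uniformity (resp.\ vanishing). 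The three main steps, however, each have a genuine gap.

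For (I), the ``only if'' direction is not an argument: exhibiting the cuspidal cubic shows that \emph{one} non-seminormal $A$ has non-uniformly-torsion $H^1$, not that every one does, and the sentence ``the same failure produces classes with no uniform annihilator'' is exactly the assertion to be proved. Moreover your ``if'' direction invokes ``Bartenwerfer's normal-case result'' that $H^q(\widetilde X,\mathcal O^+_{\widetilde X})$ is uniformly torsion for the normalization $\widetilde X$; for general $q$ that is precisely the new theorem of the present paper, not a classical input, so at best your argument for (I) only works in degree $1$ with a separate proof of the normal $H^1$ case. For (II), the route you propose --- a formal model whose failure of $\mathbb K^\circ$-flatness is killed by a single $\varpi^{c}$, fed into the $R\!\lim$ comparison --- is not how the result is proved, and producing such a model with a \emph{uniform} $c$ is essentially a restatement of the theorem; you concede as much at the end. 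Bartenwerfer's actual argument, which this paper generalizes to the normal case, runs through a finite, generically \'etale Noether normalization $\psi:X\to\mathbb B^d$ and transports the vanishing $H^q(\mathbb B^d,\mathcal O^\circ_{\mathbb B^d})=0$ along $\psi$; nothing in your sketch plays that role. For (III), the identification of $R\Gamma(X,\mathcal O_X^+)$ with $R\!\lim_n R\Gamma(\mathfrak X,\mathcal O_{\mathfrak X}/\varpi^n)$ ``up to bounded torsion'' is asserted rather than proved, and it is the heart of the matter: rational subdomains of $X$ do not correspond to formal opens of $\mathfrak X$ but of its admissible blow-ups, which are no longer smooth, so the comparison requires real work; you also need the formal model to be affine (or an ad hoc argument) before $H^q(\mathfrak X_0,\mathcal O_{\mathfrak X_0})=0$ is available. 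In short: the torsion observation and the general shape of (III) are reasonable, but none of the three parts is established by the proposal.
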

\noindent
While the statements (II) and (III) were known several years ago thanks to Bartenwerfer, Luktebohmert et al. (see for example Folgerung 3 in \cite{Bar79}, (for $q$=$1$) Theorem 2 in \cite{Bar78} and also \cite{Luk22}, \cite{KST19}), the statement $(I)$ is one of the results of recent remarkable investigations of Hansen and Kedlaya (see Theorem 10.3 in \cite{HK25}). They consider the property of having uniformly torsion integral cohomology in order to define certain subclasses of Huber pairs (called plus-sheafy and diamantine) which plays a central role in some geometric constructions in adic geometry (see \cite{HK25}).\\
In the second half of the 1970s, Bartenwerfer (see \cite{Bar78}) suggests that it might be possible to relax the smoothness hypothesis to normality in order to show that the 1st cohomology group of the integral structure sheaf is killed by a non-zero element in $\mathbb{K}^\circ$, (in the original text in german, see pag. 2 in \cite{Bar78}): 
\textit{"Mit einem Riemannschen Hebbarkeitsargument bzw. Kugelsatz diirfte sich die Voraussetzung "glatt " leicht zu "absolut normal" abschwachen lassen"}.\\
Note that in modern literature one still refers to the extension of $p$-adic analytic functions as Riemann's Hebbarkeissatz which has been proved by Bartenwerfer in the rigid analytic context as we will see later on. Moreover, for example, a version of Riemann's Hebbarkeissatz has also been proved for normal perfectoid spaces by Scholze (see sec. II.3 in \cite{Sch15}). Coming back to the problem of understanding the torsion in the integral structure sheaf cohomology, following Hansen and Kedlaya (see \cite{HK25}) one can try to be more precise and ask the natural question whether it is possible to identify the properties which $A$ has to satisfy which would be equivalent to $H^* (X , \mathcal{O}_X^+ )$ being uniformly torsion (see Question 6.15 in \cite{HK25} and Problems 27 and 39 in the Non-Archimedean Scottish Book). To be precise, we define for each positive integer $N\in \mathbb{Z}_{\geq 1}$, the following property $(P_N)$ for the Huber pair $(A; A^+)$:  the cohomology groups $H^q (X , \mathcal{O}_X^+ )$ are uniformly torsion for $1 \leq q \leq N$. Note that by De Jong and Van Der Put (see Prop. 2.5.8 in \cite{DJVP96}), because the space $X$ is in particular quasi-compact and of dimension $d$, the only relevant cohomology is the one in degrees between $1$ and $d$. In this terms, Hansen and Kedlaya showed that $(P_1 )$ is equivalent to be seminormal and Bartenwerfer showed that if $A$ is smooth than it satisfies $(P_\infty)$. Note moreover that the class of Huber pairs which satisfy $(P_\infty)$ is pretty large. For example, other than smooth affinoid algebra, it also contains all perfectoid affinoid Huber pairs as Scholze proved (see Theorem 1.8 in \cite{Sch12}). This fact and much more information concerning the relationship among these classes of Huber pairs can be found in a nice diagram at the end of \cite{HK25}. 
Motivated by Bartenwerfer's remark, the main goal of this article is to show that if $A$ is normal than it satisfies as well $(P_\infty)$, i.e. we have the following:
\begin{theorem}
Let $X=\text{Spa} (A, A^+ )$ be a normal affinoid adic space topologically of finite type. \\
Then we have that:
$$H^q (X, \mathcal{O}_X^+ ) \text{ is uniformly torsion for all }q\geq 1.$$
In particular, for each $q\geq 1$ there exists $N\in \mathbb{Z}_{\geq 1}$ such that $\mathfrak{m}^N H^q (X, \mathcal{O}_X^+ ) =0$. 
\end{theorem}
\noindent
Generally speaking, proving $(P_\infty)$ in the smooth case has the advantage that one can essentially reduce the problem to the case where $X$ admits a finite \'etale Noether normalization map to $\mathbb{B}^d$ and then use such finite \'etale map to transport the information from the target since it is well-known (thanks to Bartenwerfer in \cite{Bar78}, \cite{Bar79}) that $H^q (\mathbb{B}^d , \mathcal{O}_{\mathbb{B}^d}^\circ )=0$ for $q \geq 1$. However, such strategy doesn't hold if we weaken the assumption on $X$ and assume only normality rather than smoothness. In order to correct such approach, we combine the study of covers in the sense of Hansen with Riemann's Hebbarkeitssatz in the $p$-analytic context. As a side note, it is worth to mention that the question is also very interesting if we remove the hypothesis that the characteristic of $\mathbb{K}$ is zero. Indeed, for example Scholze results still holds for affinoid perfectoid spaces over a perfectoid field of characteristic $p$. However, in the case of affinoid algebras topologically of finite type, the "\'etale approximation" approach has several subtle issues. We hope to investigate this line of research in the future.\\
Concerning terminology, we freely refer to the rigid setting by relying on the equivalence of categories between quasi-separated rigid analytic varieties and quasi-separated adic spaces locally of finite type established by Huber (see Prop. 4.5 in \cite{Hub94}).\\
\break
\textbf{Acknowledgments: } I would like to thank A. Conti for numerous useful conversations on the topic. I would also like to thank G. Bisson and A. Rahm in the GAATI group at University of French Polynesia who supported my research as part of the MELODIA project. The MELODIA project is funded by ANR under grant number ANR-20-CE40-0013.
\end{section}
\newpage
\begin{section}{On the torsion of the integral structure cohomology via Riemann's Hebbarkeitssatz}
This section is dedicated to the proof of the main result of the article:
\begin{theorem}
Let $X=\text{Spa} (A, A^+ )$ be a normal affinoid adic space topologically of finite type. \\
Then we have that:
$$H^q (X, \mathcal{O}_X^+ ) \text{ is uniformly torsion for all }q\geq 1.$$
In particular, for each $q\geq 1$ there exists $N\in \mathbb{Z}_{\geq 1}$ such that $\mathfrak{m}^N H^q (X, \mathcal{O}_X^+ ) =0$. 
\end{theorem}
\noindent
We start by observing that by the Noether's Normalization Theorem (see for example Corollary 6.1.2/2 in \cite{BGR84}), there exists a finite surjective morphism $\psi: X \rightarrow \mathbb{B}^d =\text{Spa}(T_d , T_d^\circ )$. Since we are dealing with normal (in particular, reduced) affinoid adic spaces topologically of finite type and we are safe to assume that $A^+ =A^\circ$ (see Lemma. 4.4 in \cite{Hub94}), we can considered fixed (from a formal schemes perspective) the formal models $\text{Spf}(A^\circ )$ and $\text{Spf}(T_d^\circ )$. We have the following (which applies in our "Noether Normalization" case): 
\begin{proposition}\label{restriction}
Let $\text{Spf}(A)$ and $\text{Spf}(B)$ be formal schemes and let $X$ and $Y$ be their respective generic fiber. Suppose there is a finite injective homomorphism $\varphi : A \rightarrow B$, inducing a morphism $\varphi^{\text{rig}} : Y \rightarrow X$. Assume also that $X$ and $Y$ are normal. Then there is an analytically closed, nowhere dense subset $Z \subset X$ such that the morphism 
$$\varphi^{\text{rig}} : Y - (\varphi^{\text{rig}})^{-1} (Z) \rightarrow X - Z$$
is finite and flat. Moreover, the subset $Z$ can be chosen such that $\text{codim}_X (Z) \geq 2$. 
\end{proposition}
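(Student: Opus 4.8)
\smallskip
\noindent\emph{Proof strategy.} The plan is to take for $Z$ the image in $X$ of the non-flat locus of $\varphi^{\text{rig}}$ and to check, by a local computation at points of codimension at most $1$, that such a $Z$ automatically has codimension $\geq 2$. First I would record the soft inputs. The morphism $\varphi^{\text{rig}}$ is finite, hence affine, closed and with coherent push-forward; in particular the image under $\varphi^{\text{rig}}$ of any analytically closed subset of $Y$ is analytically closed in $X$. Moreover $\varphi^{\text{rig}}$ is of finite presentation, so its flat locus is open in $Y$ (openness of the flat locus). Let $F \subseteq Y$ be the closed complement of this flat locus and set $Z := \varphi^{\text{rig}}(F)$, an analytically closed subset of $X$. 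Since $F \subseteq (\varphi^{\text{rig}})^{-1}(Z)$, the morphism $\varphi^{\text{rig}}\colon Y - (\varphi^{\text{rig}})^{-1}(Z) \to X - Z$ is flat by construction, and it is finite, being the restriction of a finite morphism over an open subset. So the whole statement reduces to the codimension estimate.

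\smallskip
\noindent For the estimate I would fix $x \in Z$ and a point $y$ of $F$ lying over it, and rule out $\dim \mathcal{O}_{X,x} \leq 1$. As $X$ is normal, $\mathcal{O}_{X,x}$ is a normal local domain of dimension at most $1$, hence a field or a discrete valuation ring, in either case a principal ideal domain. Let $M := (\varphi^{\text{rig}}_{*}\mathcal{O}_Y)_x$; it is a finitely generated $\mathcal{O}_{X,x}$-module, it is semilocal with maximal ideals corresponding to the (finitely many) points of $Y$ above $x$, and the decisive point is that it is torsion-free over $\mathcal{O}_{X,x}$. A finitely generated torsion-free module over a principal ideal domain is free, hence flat; flatness is preserved under localization, so $\mathcal{O}_{Y,y} = M_{\mathfrak{m}_y}$ would be flat over $\mathcal{O}_{X,x}$, contradicting $y \in F$. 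Therefore every point of $Z$ has local dimension $\geq 2$ in $X$, i.e.\ $\text{codim}_X(Z) \geq 2$; in particular $Z$ is a proper analytically closed subset of the normal --- hence locally irreducible --- space $X$, so it is nowhere dense, and we are done.

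\smallskip
\noindent The one step that needs care is the torsion-freeness of $\varphi^{\text{rig}}_{*}\mathcal{O}_Y$ over $\mathcal{O}_X$; this is exactly where the injectivity of $\varphi\colon A \hookrightarrow B$ is used, together with the fact that in the relevant situation (the Noether-normalization set-up, or more generally whenever the base is a domain) $B$ is a torsion-free $A$-module. Without it the statement is false: a component of $Y$ mapping onto a proper analytic subset of $X$ produces genuine non-flatness already in codimension $1$. Conceptually the same bound can be read off from Serre's criterion together with the miracle flatness theorem --- $X$ normal is regular in codimension $\leq 1$ ($R_1$), while $Y$ normal satisfies $S_2$ and an $S_2$ local ring of dimension $\leq 2$ is Cohen--Macaulay, so on the complement of $\text{Sing}(X) \cup \varphi^{\text{rig}}(\text{NCM}(Y))$, which has codimension $\geq 2$ in $X$, one has $\mathcal{O}_{X,x}$ regular, $\mathcal{O}_{Y,y}$ Cohen--Macaulay and zero-dimensional fibres, hence flatness (modulo the same equidimensionality input, needed to match $\dim \mathcal{O}_{Y,y}$ with $\dim \mathcal{O}_{X,x}$). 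I expect this torsion-freeness / equidimensionality bookkeeping to be the only genuinely delicate point; everything else is formal.
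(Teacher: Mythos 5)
Your argument is sound, and it is worth noting that the paper itself gives no proof at all here: it simply cites Lemma 7.3.2 of de Jong's paper. What you have written is essentially a self-contained proof of that lemma, by the standard route: take $Z$ to be the image of the (analytically closed) non-flat locus, note that finiteness makes this image closed and the restricted map finite and flat by construction, and then kill codimension $\leq 1$ points by observing that a normal local ring of dimension $\leq 1$ is a field or a DVR, over which a finitely generated torsion-free module is free, hence flat, with flatness surviving localization at the maximal ideals of the semilocal stalk. The alternative you sketch via $R_1$ for $X$, $S_2$ for $Y$ and miracle flatness is also a legitimate (and in positive characteristic the more robust) packaging of the same computation. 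So the approach is fine and, if anything, more informative than the paper's citation.

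The one point to tighten is exactly the one you flag, but your diagnosis of when it holds is slightly off. Torsion-freeness of $\varphi^{\text{rig}}_*\mathcal{O}_Y$ over $\mathcal{O}_X$ does \emph{not} follow from ``$A$ is a domain'' together with injectivity of $\varphi$: with $A=T_1$ and $B=T_1\times (T_1/(t))$ the map $A\to B$ is finite and injective, both rigid spaces are normal, yet $B$ has $A$-torsion and the non-flat locus sits over the codimension-one subset $V(t)$. What is actually needed is that $A\to B_i$ be injective for \emph{each} factor $B_i$ of $B$, i.e.\ that every irreducible component of $Y$ dominate $X$; equivalently, that no component of $Y$ map into a proper analytic subset. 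In the Noether-normalization application this is harmless once one first reduces to $X=\mathrm{Spa}(A,A^\circ)$ connected, hence (being normal) irreducible, so that $A$ is a normal domain of pure dimension $d$ and the finite injective map $T_d\to A$ into a domain is automatically torsion-free. You should either add this reduction explicitly or note that the hypotheses of the proposition as literally stated do not quite suffice and that the missing condition is supplied by the intended application (and is present in de Jong's formulation). With that caveat recorded, the proof is complete.
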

\begin{proof}
This is Lemma 7.3.2 in \cite{DJ95}. 
\end{proof}
\noindent
This sort of generic flatness results are well-known and some slightly stronger result can be found in Bhatt and Hansen (see \cite{BH22}). Note that in our special case, we can say something stronger. Indeed, since $\text{char}(\mathbb{K})=0$, we have that $\psi$ is generically \'etale, i.e. there exists a Zariski-open $U \subset \mathbb{B}^d$ such that $\psi: \psi^{-1} (U) \rightarrow U $ is flat and unramified (see for example sec. 6.2 in \cite{DJ95}). Note that this is very special in the case of characteristic zero and the situation is much more subtle if the characteristic is positive (see \cite{BH22}).
Now, in order to use this sort of etale approximation of $\psi$ to compute the integral cohomology, we rely on Bartenwerfer's rigid analytic version of Riemann's Hebbarkeitssatz (see the original sec. 3 in \cite{Bar76}, see Thm.2.6 in \cite{Han20} for an english version):
\begin{proposition}\label{hartog}
Let $X$ be a normal rigid space and let $Z\subset X$ be a nowhere dense closed analytic subset with $j : X-Z \rightarrow X$ the inclusion of the open complement. Then we have isomorphism of sheaves $\mathcal{O}_X^+ \cong j_* \mathcal{O}_{X-Z}^+$  and $\mathcal{O}_X \cong (j_* \mathcal{O}_{X-Z}^+ )[\frac{1}{\omega}]$. In particular, if $X$ is affinoid we have $\mathcal{O}_X (X)\cong \mathcal{O}_X^+ (X-Z )[\frac{1}{\omega}]$.
\end{proposition}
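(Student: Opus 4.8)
The plan is to deduce the statement from Serre's normality criterion, treating the divisorial (codimension-one) and the higher-codimension parts of $Z$ separately. First I would observe that the two displayed isomorphisms are equivalent: since $\omega$ is a unit in $\mathcal{O}_X$, a section of $(j_*\mathcal{O}_{X-Z}^+)[\tfrac1\omega]$ over an open $U$ is exactly a \emph{bounded} analytic function on $U-Z$, and, by density of $X-Z$ together with the fact that ``$|\,\cdot\,|\leq 1$'' is a closed condition, an extension to $\mathcal{O}_X(U)$ of a function that is $\leq 1$ on $U-Z$ is again $\leq 1$ on $U$; so the power-bounded sections match up and everything reduces to the single claim: \emph{every bounded analytic function on $U-Z$ extends (uniquely) to $\mathcal{O}_X(U)$}. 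As this is local I would pass to $X=\operatorname{Spa}(A,A^+)$ affinoid, reduce to $A$ a normal domain (a Noetherian normal ring is a finite product of normal domains), and — it being only harder to extend across a larger closed set — replace $Z$ by $V(g)$ for a single nonzero $g\in A$; by Krull's Hauptidealsatz $V(g)$ has pure codimension one.

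Now let $f$ be bounded and analytic on $X-V(g)$. For each generic point $\eta$ of a component of $V(g)$, normality provides property $R_1$, so the local ring $\mathcal{O}_{X,\eta}$ is a discrete valuation ring whose fraction field is obtained by inverting a uniformizer; hence the germ of $f$ at $\eta$ has a well-defined valuation $v_\eta(f)\in\mathbb{Z}$. If $v_\eta(f)<0$ then in a neighbourhood of $\eta$ one can write $f=h\,p^{v_\eta(f)}$ with $p$ a uniformizer of $\mathcal{O}_{X,\eta}$ and $h$ a unit there; letting classical points of $X-V(g)$ approach a point of that neighbourhood lying on $V(g)$ makes $|p|\to 0$ while $|h|$ stays bounded away from $0$, forcing $|f|\to\infty$ and contradicting boundedness. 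Thus $v_\eta(f)\geq 0$, i.e. $f$ extends analytically past $\eta$; gluing these local extensions with $f$ over the dense open $X-V(g)$ produces an extension of $f$ to $X-W$ for a closed $W\subseteq V(g)$ of codimension $\geq 2$ in $X$.

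Finally, normality also provides $S_2$, so $\operatorname{depth}_W A\geq 2$ and the removable-singularity theorem across closed subsets of codimension $\geq 2$ (equivalently $H^0_W=H^1_W=0$ for the coherent sheaf $\mathcal{O}_X$) gives $\mathcal{O}_X(X)=\mathcal{O}_X(X-W)$; hence $f$ extends to $\mathcal{O}_X(X)$, and by the closedness of ``$|\,\cdot\,|\leq 1$'' the extension is again bounded, and power-bounded when $f$ is. This proves the displayed isomorphisms and, taking $U=X$, the affinoid statement $\mathcal{O}_X(X)\cong\mathcal{O}_X^+(X-Z)[\tfrac1\omega]$.

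The step I expect to be the main obstacle is the codimension-one one: it is exactly here that normality cannot be dispensed with — on a nodal curve, a locally constant but non-constant function on the complement of the node is bounded yet does not extend — and here that the hypothesis ``bounded'' is used, since an arbitrary analytic function may have poles along a divisor. The two delicate points are identifying the stalk $\mathcal{O}_{X,\eta}$ of an adic space at a codimension-one point with a discrete valuation ring (so that $v_\eta$ makes sense), and turning ``$f$ has a pole along a component of $V(g)$'' into genuine unboundedness on the dense complement, which requires being careful with the adic topology near the divisor; the codimension-$\geq 2$ extension, by contrast, is a standard consequence of depth $\geq 2$, and the initial reductions are routine.
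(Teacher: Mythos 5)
The paper itself offers no proof of this proposition --- it is quoted from Bartenwerfer (see also Hansen's English write-up) --- so your attempt has to be measured against the classical argument rather than anything in the text. Your architecture is the right one: reduce to $Z=V(g)$ of pure codimension one, extend across the divisorial part using $R_1$ plus boundedness, then across the residual codimension-$\geq 2$ locus using $S_2$. The preliminary reductions (equivalence of the two displayed isomorphisms; preservation of the bound under extension, since the Shilov boundary of a reduced affinoid meets no nowhere dense closed analytic subset; passage to a normal domain) are all sound.

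The gap is in the codimension-one step, and it is the heart of the theorem. You write that ``the germ of $f$ at $\eta$ has a well-defined valuation $v_\eta(f)\in\mathbb{Z}$'' because $\mathcal{O}_{X,\eta}$ is a discrete valuation ring. But $f$ is only given on $X-V(g)$, and the ring of analytic functions there is vastly larger than $A[1/g]$ or $\operatorname{Frac}(A_{\mathfrak{p}})$: already for $X=\mathbb{B}^1$ and $g=T$ it contains Laurent series $\sum_{n\in\mathbb{Z}}a_nT^n$ with infinitely many nonzero negative coefficients, i.e.\ essential singularities along the divisor, and such an $f$ has no valuation at the generic point (nor, in the adic picture, is $X-V(g)$ a punctured neighbourhood of any point of $V(g)$, so $f$ has no germ there at all). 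Asserting that $v_\eta(f)$ exists is equivalent to asserting that $f$ is meromorphic along $V(g)$, i.e.\ that $g^Nf$ extends for some $N$ --- and that is precisely the nontrivial analytic content of the Hebbarkeitssatz; once meromorphy is known, your ``a pole contradicts boundedness'' argument is the easy half. The actual proofs (Bartenwerfer, L\"utkebohmert) obtain it by putting the hypersurface in Weierstrass position and expanding $f$ as a Laurent series in the distinguished variable on a tube around the divisor, where boundedness kills the negative part fibrewise via the one-variable case. A milder instance of the same issue affects your last step: extension across $W$ of codimension $\geq 2$ is not a formal consequence of algebraic depth, since $\mathcal{O}_X(X-W)$ is again not an algebraic localization of $A$; it is a rigid-analytic theorem in its own right, though there the statement you need is genuinely available in the literature and a citation would suffice.
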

\noindent
Before proceeding, we now introduce a useful terminology from \cite{Han20}. Following Hansen (see Def. 1.5 in \cite{Han20}), let $X$ be a normal rigid space. A cover of $X$ is a finite surjective map $\varphi: Y \rightarrow X$ from a normal rigid space $Y$, such that there exists some closed nowhere-dense analytic subset $Z \subset X$ with $\varphi^{-1} (Z)$ nowhere dense and such that $Y - \varphi^{-1}(Z) \rightarrow X - Z$ is finite and \'etale. \\
We remark that in our situation, Riemann's Hebbarkeitssatz can be simultaneously applied to both $X$ and $\mathbb{B}^d$ with respect to the map $\psi$. To be precise, we have the following (see Prop. 2.8 in \cite{Han20}):
\begin{proposition}
Let $X$ be a normal rigid space and let $\pi : Y \rightarrow X$ be a cover for $X$. Then each irreducible component of $Y$ maps surjectively onto some irreducible component of $X$. Moreover, if $V\subset X$ is any closed nowhere-dense analytic subset, then $\pi^{-1} (V)$ is nowhere-dense. 
\end{proposition}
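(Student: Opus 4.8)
The strategy is to exploit the fact that a normal rigid space is locally irreducible, so that on both $X$ and $Y$ the irreducible components coincide with the connected components and are in particular open (and closed); I would simply cite this. Fix a nowhere-dense closed analytic subset $Z\subset X$ witnessing that $\pi$ is a cover, and set $X'=X-Z$ and $Y'=Y-\pi^{-1}(Z)$, so that $\pi'=\pi|_{Y'}\colon Y'\to X'$ is finite \'etale. The first point to establish is that the connected components of $X'$ are exactly the subsets $X_j\cap X'$, as $X_j$ ranges over the irreducible components of $X$: indeed, since $Z$ is nowhere dense in $X$ and $X_j$ is open, $Z\cap X_j$ is nowhere dense in the irreducible space $X_j$, so $X_j\cap X'$ is a dense open --- hence irreducible, hence connected --- subspace of $X_j$, and these subspaces are pairwise disjoint, clopen in $X'$, and cover $X'$. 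Running the identical argument for $Y$, now invoking the defining hypothesis that $\pi^{-1}(Z)$ is nowhere dense in $Y$, the connected components of $Y'$ are exactly the sets $Y_i\cap Y'$ for $Y_i$ an irreducible component of $Y$; in particular each $Y_i\cap Y'$ is nonempty.

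Next I would fix an irreducible component $Y_i$ of $Y$ and pin down the component of $X$ it covers. The connected set $Y_i\cap Y'$ maps under $\pi'$ into a single connected component of $X'$, which by the previous step is $X_j\cap X'$ for a unique irreducible component $X_j$ of $X$. Since $Y_i\cap Y'$ is clopen in $Y'$ and $\pi'$ is open (being \'etale) and closed (being finite), the image $\pi'(Y_i\cap Y')$ is a nonempty clopen subset of the connected space $X_j\cap X'$, hence is all of $X_j\cap X'$. Thus $\pi(Y_i)\supseteq X_j\cap X'$.

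To finish the first assertion I would pass to closures inside $X$. The morphism $\pi$ is finite, hence proper, so $\pi(Y_i)$ is a closed analytic subset of $X$, and it is irreducible as the continuous image of the irreducible set $Y_i$; moreover it contains $\overline{X_j\cap X'}=X_j$, the closure being all of $X_j$ because $X_j\cap X'$ is dense in $X_j$. As $X_j$ is a maximal irreducible closed subset of $X$, this forces $\pi(Y_i)=X_j$, proving that every irreducible component of $Y$ surjects onto an irreducible component of $X$. The second assertion is then formal: for a closed analytic subset $V\subset X$, the set $\pi^{-1}(V)$ is closed analytic in $Y$, and --- $Y$ being locally irreducible --- it is nowhere dense precisely when it contains no irreducible component of $Y$. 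But if $Y_i\subseteq\pi^{-1}(V)$ then $X_j=\pi(Y_i)\subseteq V$, which is impossible since $V$, being nowhere dense in the locally irreducible space $X$, contains no irreducible component of $X$. Hence $\pi^{-1}(V)$ is nowhere dense.

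The only inputs that are not pure point-set bookkeeping are the local irreducibility of normal rigid spaces, together with the standard facts that finite morphisms of rigid spaces are closed with closed analytic image and that \'etale morphisms are open. The one place I would expect to need a moment's care is the identification of the connected components of $X'$ and $Y'$, since these spaces need not be quasi-compact and could a priori have infinitely many components; but the density argument above handles every component uniformly, so no finiteness is actually needed.
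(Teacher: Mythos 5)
The paper gives no argument of its own for this statement: it simply invokes Prop.~2.8 of \cite{Han20}, so any complete proof you write is ``different'' only in the sense that you are reconstructing what the paper outsources, and your reconstruction is essentially the standard (indeed essentially Hansen's) argument: components of a normal rigid space are clopen and irreducible, the \'etale locus produces images that are simultaneously open and closed analytic, and the nowhere-density claim is formal once the first claim is known. Two points in your write-up are not the ``pure point-set bookkeeping'' you label them as. First, the step ``$X_j\cap X'$ is a dense Zariski-open of an irreducible space, hence irreducible, hence connected'' (and likewise for $Y_i\cap Y'$) is precisely the nontrivial fact that a nowhere-dense closed analytic subset does not disconnect a connected normal rigid space; in \cite{Han20} this is itself a consequence of the Riemann Hebbarkeitssatz quoted in this paper as Proposition~\ref{hartog} (equivalently it lives in Conrad's theory of irreducible components), so it must be cited as an input on a par with local irreducibility, not treated as topology. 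Second, ``nonempty clopen subset of a connected space, hence everything'' is slightly loose in the $G$-topology, where concluding from a disjoint open cover requires admissibility; since you already know $X_j\cap X'$ is irreducible, it is cleaner to say that $\pi'(Y_i\cap Y')$ is a closed analytic subset (finite maps have closed analytic images) containing a nonempty admissible open (flat, finite-type maps are open), and a proper closed analytic subset of an irreducible space is nowhere dense, so the image is all of $X_j\cap X'$. Similarly, ``continuous image of an irreducible set is irreducible'' is legitimate here only via the characterization of irreducibility through closed analytic subsets together with the closedness of finite maps, which is worth saying explicitly. With these citations and small repairs your argument is correct and matches the proof the paper points to.
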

\noindent
So until now we have shown that every normal affinoid adic space topologically of finite type has a large (i.e. except a nowhere dense closed analytic subset) finite \'etale Noether Normalization. Now, in order to prove that the integral cohomology groups are uniformly torsion we proceed as follows. Since $Z$ is a closed analytic set, we can write it as $Z=V(I)$ where $I$ is a finitely generated ideal inside $\mathcal{O}_X (X)$. Without loss of generality, we can reduce the problem in assuming that $I$ is a principal ideal. Indeed, by decomposing $I$ as a sum of $I_{p}$ and $\tilde I$ where $I_p$ is principal, i.e. $I=I_p +\tilde I$, we have the following two relations:
$$X - V(I) = (X- V(I_p )) \cup (X- V(\tilde I ))$$
$$(X-V( I_p )) \cap (X- V(\tilde I ))= X-V (I_p \cdot \tilde I ).$$
By an induction argument on the number of generators of $I$ and thanks to the Mayer-Vietoris sequence, we deduce that it is enough to prove that the cohomology of the integral structure sheaf is uniformly torsion under the condition that $I$ is principal, i.e. $I=( z )$ for some element $z \in \mathcal{O}_X^\circ (X)$. We now have the following useful result (it is an extension of Lemma A.10 in \cite{Luk22}):
\begin{lemma}
Let $X=\text{Spa}(A, A^\circ)$ be a $d$-dimensional normal affinoid adic space topologically of finite type over a non-archimedean field of characteristic 0. Assume that $\psi: X\rightarrow \mathbb{B}^d$ is a finite cover of $\mathbb{B}^d$ (in the sense of Hansen) which is \'etale over $\mathbb{B}^d - V(z)$ for some non-zero section $z\in T_d^\circ$ and assume that $\psi$ is of rank $n\in \mathbb{Z}_{\geq 1}$. Then we have that: \\
\break
$(i)$ $\psi_*  \mathcal{O}_X$ is a finite and free coherent $\mathcal{O}_{\mathbb{B}^d}$-module of rank $n$,\\
\break
$(ii)$ there exists $\alpha \in \mathbb{Z}_{\geq 1}$ such that:
$$ z^\alpha \psi_*  \mathcal{O}^\circ_X \subset \oplus_{i=1}^{n}  {\mathcal{O}^\circ}_{\mathbb{B}^d} e_i \subset 
\oplus_{i=1}^{n} \mathcal{O}_{\mathbb{B}^d} e_i = \psi_* \mathcal{O}_X ,$$
for linearly independent elements $e_1 , \dots , e_n \in A^\circ$. 
\end{lemma}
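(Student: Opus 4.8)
\noindent
The plan is to reduce both parts to statements about the finite $T_d$-module $A$ and its subring $A^\circ$ of power-bounded elements, using that, $\mathbb{B}^d$ and $X$ being affinoid, the sheaves $\mathcal{O}_{\mathbb{B}^d}$, $\mathcal{O}_{\mathbb{B}^d}^\circ$, $\psi_*\mathcal{O}_X$, $\psi_*\mathcal{O}_X^\circ$ have global sections $T_d$, $T_d^\circ$, $A$, $A^\circ$ respectively; part $(i)$ is close to Lemma A.10 of \cite{Luk22}, reproved here so as to keep control of the basis.

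\smallskip\noindent
\emph{Part $(i)$.} Since $\psi$ is finite, $\psi_*\mathcal{O}_X$ is coherent of generic rank $n$, and since each irreducible component of $X$ surjects onto $\mathbb{B}^d$ (see \cite{Han20}) it is torsion-free over $\mathcal{O}_{\mathbb{B}^d}$. I would then prove it is \emph{locally free} of rank $n$. Over $\mathbb{B}^d-V(z)$ this is clear, $\psi$ being finite \'etale there. Along $V(z)$ it is equivalent to the flatness of $\psi$, and here both hypotheses are used: $X$ being normal, $\psi_*\mathcal{O}_X$ is the normalisation of $\mathcal{O}_{\mathbb{B}^d}$ in the generic fibre, and $\mathbb{K}$ having characteristic $0$, the ramification of $\psi$ along the divisor $V(z)$ is tame; a tame cover of a regular space with normal total space and branch locus contained in a divisor is flat over the base (this refines the generic flatness of Proposition~\ref{restriction}; compare \cite{BH22}), so $\psi_*\mathcal{O}_X$ is flat, hence locally free of rank $n$, on all of $\mathbb{B}^d$. (When $d\le 2$ one may instead argue directly, $\psi_*\mathcal{O}_X$ being reflexive by normality of $X$ and reflexive coherent sheaves on a regular two-dimensional space being locally free.) By the Quillen--Suslin theorem for Tate algebras a finite locally free $\mathcal{O}_{\mathbb{B}^d}$-module is free, so $\psi_*\mathcal{O}_X\cong\mathcal{O}_{\mathbb{B}^d}^{\,n}$; picking a free basis and recalling that a pseudo-uniformiser $\omega$ of $\mathbb{K}$ is a \emph{unit} in $\mathcal{O}_{\mathbb{B}^d}$, we may rescale it to lie in $A^\circ$. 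This yields $(i)$ and the equality $\bigoplus_{i=1}^n\mathcal{O}_{\mathbb{B}^d}e_i=\psi_*\mathcal{O}_X$ in $(ii)$, with $e_1,\dots,e_n\in A^\circ$ linearly independent.

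\smallskip\noindent
\emph{Part $(ii)$.} I would use the trace pairing. As $\psi$ is generically \'etale ($\mathrm{char}\,\mathbb{K}=0$) the trace $\mathrm{Tr}:=\mathrm{Tr}_{A/T_d}\colon A\to T_d$ is defined, and since $A^\circ$ is integral over $T_d^\circ$ — each element of $A^\circ$ satisfies its characteristic polynomial over $T_d$, whose coefficients have spectral norm $\le 1$ and so lie in $T_d^\circ$ — we get $\mathrm{Tr}(A^\circ)\subseteq T_d^\circ$; in particular the Gram matrix $G:=(\mathrm{Tr}(e_ie_j))_{i,j}$ and its adjugate $\mathrm{adj}(G)$ have entries in $T_d^\circ$. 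Over $\mathbb{B}^d-V(z)$ the reductions of the $e_i$ span an \'etale algebra with perfect trace form, so $\delta:=\det G$ is invertible there, i.e. $V(\delta)\subseteq V(z)$; by the analytic Nullstellensatz (\cite{BGR84}) there is $M\ge 1$ with $z^M\in(\delta)$, and using tameness one moreover checks that for $M$ large $h:=z^M/\delta$ lies in $T_d^\circ$. The trace-dual basis is $e_i^\ast=\delta^{-1}\sum_j\mathrm{adj}(G)_{ij}e_j$, and for $a\in A^\circ$, writing $a=\sum_i c_i(a)e_i$, one has $c_i(a)=\mathrm{Tr}(a\,e_i^\ast)$, whence $z^Mc_i(a)=h\cdot\mathrm{Tr}\!\big(a\sum_j\mathrm{adj}(G)_{ij}e_j\big)\in T_d^\circ$, since the inner argument is power-bounded and $h\in T_d^\circ$. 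Thus $z^\alpha A^\circ\subseteq\bigoplus_i T_d^\circ e_i$ with $\alpha:=M$. The identical computation over each rational subdomain of $\mathbb{B}^d$, with the same $\alpha$, sheafifies to $z^\alpha\psi_*\mathcal{O}_X^\circ\subseteq\bigoplus_i\mathcal{O}_{\mathbb{B}^d}^\circ e_i$, and the displayed chain of inclusions follows from $\mathcal{O}_{\mathbb{B}^d}^\circ\subseteq\mathcal{O}_{\mathbb{B}^d}$.

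\smallskip\noindent
The real obstacle should be the local freeness of $\psi_*\mathcal{O}_X$ along $V(z)$ in $(i)$: dropping normality of $X$ destroys reflexivity, and dropping $\mathrm{char}\,\mathbb{K}=0$ destroys tameness, and in either case $\psi_*\mathcal{O}_X$ can fail to be a vector bundle along $V(z)$ (reflexive coherent sheaves on $\mathbb{B}^d$ need not be free once $d\ge 3$), so this step uses the hypotheses in full. The second delicate point is the \emph{uniformity} of $\alpha$ in $(ii)$ — one power of $z$ that works for all of $A^\circ$ and over all rational subdomains — which rests on the discriminant bound $z^M\in(\delta)$ and on the tameness input $z^M/\delta\in T_d^\circ$ needed to rule out stray powers of the pseudo-uniformiser. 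Riemann's Hebbarkeitssatz (Proposition~\ref{hartog}) plays no role in this lemma itself; it enters only afterwards, in deducing the main theorem from it.
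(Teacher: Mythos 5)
Your part (ii) is a genuinely different and essentially workable route. The paper instead writes $z^\beta \psi_* f=\sum_i b_ie_i$ with $b_i\in T_d$ via Kisin's superconvergent modules and then bounds the $b_i$ by evaluating at the fibres of $\psi$ and inverting a Vandermonde-type matrix; your trace/discriminant argument ($\delta\, c_i(a)=\mathrm{Tr}\bigl(a\sum_j\mathrm{adj}(G)_{ij}e_j\bigr)\in T_d^\circ$ together with $z^M\in(\delta)$) accomplishes both steps at once and is arguably cleaner. Note, however, that both arguments share the same soft spot at the very end: writing $z^M=g\delta$ only gives $g\in T_d$, and since the Gauss norm on $T_d$ is multiplicative, $\|z^M/\delta\|=\|z\|^M/\|\delta\|$ need not drop below $1$ as $M$ grows (take $\|z\|=1$ and $\|\delta\|<1$); tameness does not repair this. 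The honest conclusion is $c\,z^M A^\circ\subseteq\bigoplus_i T_d^\circ e_i$ for some nonzero $c\in\mathbb{K}^\circ$, which still suffices for the intended application but is weaker than the displayed inclusion.

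The genuine gap is in part (i). The statement you invoke --- that a tame cover of a regular space with normal total space and branch locus contained in a divisor is flat over the base --- is not a theorem. By miracle flatness, a finite surjection onto a regular base is flat exactly when the source is Cohen--Macaulay; normality gives only $R_1+S_2$, which forces Cohen--Macaulayness only for $d\le 2$ (your parenthetical reflexivity argument is fine in that range), and tameness is a condition on ramification, not on depth. In dimension $\ge 3$ a normal, non-Cohen--Macaulay affinoid (e.g.\ coming from the cone over an abelian surface) has a Noether normalization that is finite, tame in characteristic $0$, branched in a divisor by purity, and not flat. The paper obtains local freeness by a different mechanism: it uses Riemann's Hebbarkeitssatz to identify $\psi_*\mathcal{O}_X$ with $i_*(\psi_*\mathcal{O}_{X-\psi^{-1}(V)})$, the pushforward from the \'etale locus, deduces local freeness from there, and then invokes L\"utkebohmert's theorem that locally free $T_d$-modules are free (rather than Quillen--Suslin). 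So your closing remark that the Hebbarkeitssatz plays no role in this lemma is exactly backwards: it is the paper's substitute for the flatness claim you could not supply. You should either adopt that route or restrict your direct argument to the Cohen--Macaulay case.
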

\begin{proof} 
Concerning $(i)$, denote for simplicity $V(z)=V$. Since $\psi$ is a finite affinoid morphism, we have that $\psi_* \mathcal{O}_X$ is a coherent $\mathcal{O}_{\mathbb{B}^d} $-module. Let $i$ and $j$ respectively be the immersions $i: \mathbb{B}^d - V \rightarrow \mathbb{B}^d$ and $j: X - \psi^{-1}(V) \rightarrow X$. Here, we rely heavly on the extension result given by Prop. \ref{hartog} which ensures us that $\mathcal{O}_X \cong j_* \mathcal{O}_{X- \psi^{-1}(V)}$ and $\mathcal{O}_{\mathbb{B}^d} \cong i_* \mathcal{O}_{\mathbb{B}^d- V}$ (and similarly for the integral subsheaves). Hence, we have the following chain of isomorphisms of $\mathcal{O}_{\mathbb{B}^d}$-modules $\psi_*  \mathcal{O}_X \cong \psi_* (j_*\mathcal{O}_{X- \psi^{-1}(V)} )\cong i_* (\psi_* \mathcal{O}_{X- \psi^{-1}(V)})$ since $\psi \circ j = i \circ \psi$. 
Note that by Prop \ref{restriction}, the map $\psi: X - \psi^{-1}(V) \rightarrow \mathbb{B}^d - V$ is finite and \'etale. As a consequence, we have that $\psi_* \mathcal{O}_{X- \psi^{-1}(V)}$ is a locally free $ \mathcal{O}_{\mathbb{B}^d- V}$-module. We conclude that $i_* (\psi_* \mathcal{O}_{X- \psi^{-1}(V)})$ is a locally free $ i_* (\mathcal{O}_{\mathbb{B}^d- V})$-module.  We deduce that $\psi_*  \mathcal{O}_X$ is a locally free coherent $\mathcal{O}_{\mathbb{B}^d}$-module. A result of Luktebohmert (see Satz 1 and following corollary in \cite{Luk77}), ensures us that every locally free modules over $T_d$ is also free. This completes the proof of part $(i)$.\\ 
Now, we focus on part $(ii)$. We know that $\psi_* \mathcal{O}_X$ is a free $\mathcal{O}_{\mathbb{B}^d }$-module. Fix a basis $e_1 , \dots, e_n \in \mathcal{O}_X^\circ$ such that $\psi_* \mathcal{O}_X \cong \oplus_{i=1}^{n} \mathcal{O}_{\mathbb{B}^d} e_i$. \\
The claim is to prove that there exists an element $\alpha \in \mathbb{Z}_{\geq 1}$ such that:
$$z^\alpha \psi_*  \mathcal{O}^\circ_X \subset \oplus_{i=1}^{n} {\mathcal{O}}^\circ_{\mathbb{B}^d} e_i \subset 
\oplus_{i=1}^{n} \mathcal{O}_{\mathbb{B}^d} e_i = \psi_* \mathcal{O}_X ,$$
for linearly independent elements $e_1 , \dots , e_n \in A^\circ$. 
Note first that since $\psi$ finite, the sheaf $\psi_* \mathcal{O}^\circ_X$ is finitely generated as  $\mathcal{O}^\circ_{\mathbb{B}^d}$-module. So in order to get the claim it is sufficient to show that once a generator is fixed, say $\psi_* f \in \psi_* \mathcal{O}^\circ_X$, we have that there exists an $\alpha \in \mathbb{Z}_{\geq 1}$ and $b_1 , \dots , b_n\in \mathcal{O}^\circ_{\mathbb{B}^d}$ such that $z^\alpha\psi_* f = \sum_i b_i e_i$ for some positive integer $\alpha$. Since there are only a finite number of generators than one can proceed in taking the maximum among the $\alpha$'s. \\
Define the open $U:= \mathbb{B}^d - V(z)$ and fix a global section $f\in \mathcal{O}_X (X)\cong A$ as above. In order to simplify the notation set $N:=\psi_* \mathcal{O}^\circ_X$ (resp. $M:=\psi_* \mathcal{O}_X$) as finitely generated coherent $\mathcal{O}^\circ_X$-module (resp. finite and free $\mathcal{O}_X$-module of rank $n$). Note that the direct image functor is left exact and so $N$ can be regarded as a submodule of $M$. Note that by Riemann's Hebbarkeissatz, every element in $M$ (and in N) is uniquely determined by its restriction to $U$. By restriction, we can consider $\psi_* f \in M(U)$ (we keep the same notation after restriction). We know that $\psi: \psi^{-1} (U) \rightarrow U$ is a finite, \'etale morphism of rank $n\in \mathbb{Z}_{\geq 1}$. By Kisin (see Prop. 1.3.2 and Cor. 1.3.3 in \cite{Kis99}), we know that there are isomorphism as $T_d$-modules $M (U) \cong M \langle \langle z^{-1}\rangle \rangle \cong M \otimes_{T_d} T_d \langle \langle z^{-1}\rangle \rangle$. Note that $ T_d \langle \langle z^{-1}\rangle \rangle$ is a flat $T_d$-algebra (see Cor. 1.3.3 in \cite{Kis99}). Kisin's theory of superconvergent modules ensures us that the natural map $M\rightarrow M\langle \langle z^{-1}\rangle \rangle$ is closed (see Lemma 1.3.7 in \cite{Kis99}). Hence, since $f$ is a global section, there exists a $\beta \in \mathbb{Z}_{\geq 1}$ such that $z^\beta \psi_* f = \sum b_i e_i$ where $b_i \in T_d$. The idea now is to set a linear system which will allow us to isolate the coefficients $b_i$ and estimate their norm. In order to do this, consider the set $\{y_1 , \dots, y_n\}$ of points in $X$ giving the fiber of $\psi$ at $x \in \mathbb{B}^d$. Evaluating $\psi_* f$ at the point $x$ gives us the following linear system: 
$$(*) \quad\quad\quad ( z(y_1 )^\beta f(y_1 ) , \dots, z(y_n )^\beta f(y_n )) =(b_1 (x) , \dots, b_n (x) )\cdot N_e ,$$
where $N_e$ is the matrix having generic term $(N_e )_{i, j} = e_j (y_i )$. Now, in order to isolate the coefficients $b_i$, we need to ensure that the matrix $N_e$ is invertible. To do that, we rely on the structure theorem of \'etale morphisms (we recall that we are now under restriction on $\psi^{-1}(U)$ where we know $\psi$ to be finite \'etale). To be precise, by the structure theorem of \'etale morphism (see for example in EGA IV Prop. 18.4.5 and 18.4.6 in \cite{Gro67}), we know that locally at any point $y\in \psi^{-1} (U)$, we have $A_y \cong (T_d)_{\psi(y)} [X] / (Q)$ for a certain polynomial $Q$ in the localization $(T_d)_{\psi(y)}$. As a consequence, after restriction to $U$, there exist $d_i \in \mathcal{O}_{\mathbb{B}^d} (U) \cong T_d \langle\langle z^{-1} \rangle \rangle$ such that:
$$\psi_* f =\sum^n_{i=1} d_i (X^i \mod Q) \quad \text{ in }M(U).$$
If we repeat the construction of the liner system $(*)$, it is clear that in the new basis $\{X^i \mod Q\}_{i=1, \dots, n}$, the correspondent matrix $N$ (which plays the role of $N_e$) is now a Van Der Monde matrix. Note that all the points in the fiber of $x \in U$ (i.e. the points $y_i \in \psi^{-1} (U)$) are distinct because $\psi$ is \'etale and so $N$ is invertible. Since $N_e$ can be obtained from $N$ via a base change, it follows that $N_e$ is invertible. \\
Finally, from the linear system $(*)$ it is clear that we can isolate the coefficients $b_i$ and bound them in terms of the norm of $f$, the norm of $z^\beta$ and the norm of the inverse of the matrix $N_e$, which can be written in terms of the norm of the adjoint matrix $N^{Ad}_e$ and the norm of the determinant of $N_e$. Since $f$ and $z$ are power-bounded, up to replacing $\beta$ by a bigger positive integer $\alpha$, we have that there exists $b_i \in T_d^\circ$ such that:
$$z^\alpha \psi_* f =\sum^n_{i=1} b_i e_i \quad \text{ in }M(U),$$
which completes the proof.
\end{proof}
\noindent
The next step is to prove the following: 
\begin{claim}
For all $q\geq 1$, there exists $\alpha\in \mathbb{Z}_{\geq 1}$ such that $h^\alpha H^q (X , \mathcal{O}_X^\circ )=0$, for $h=\psi(z) \in \mathcal{O}^\circ_X (X)$.
\end{claim}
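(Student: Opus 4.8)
The plan is to transport the computation of $H^q(X,\mathcal{O}_X^\circ)$ to the base $\mathbb{B}^d$ along the finite morphism $\psi$, and there to play off the ``sandwich'' of sheaves furnished by the previous lemma against the vanishing $H^q(\mathbb{B}^d,\mathcal{O}_{\mathbb{B}^d}^\circ)=0$ for $q\geq 1$ recalled above (this is part $(III)$ of Theorem 1.1 applied to the ball, which has an obvious smooth formal model). First I would record that, $\psi$ being finite, the higher direct images $R^q\psi_*\mathcal{F}$ vanish for every $q\geq 1$ and every abelian sheaf $\mathcal{F}$ on $X$; by the Leray spectral sequence this gives natural isomorphisms $H^q(X,\mathcal{F})\cong H^q(\mathbb{B}^d,\psi_*\mathcal{F})$ for all $q$, compatibly with the $T_d^\circ$-module structures (on the source via $\psi^*:T_d^\circ\to A^\circ$, on the target via $\mathcal{O}_{\mathbb{B}^d}^\circ(\mathbb{B}^d)=T_d^\circ$).

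Next I would consider the short exact sequence of sheaves on $\mathbb{B}^d$
$$0\longrightarrow\bigoplus_{i=1}^n\mathcal{O}_{\mathbb{B}^d}^\circ e_i\longrightarrow\psi_*\mathcal{O}_X^\circ\longrightarrow\mathcal{C}\longrightarrow0,$$
where $e_1,\dots,e_n\in A^\circ$ are the elements produced by part $(ii)$ of the lemma and $\mathcal{C}$ denotes the cokernel. The left inclusion makes sense because each $e_i$ is power-bounded, so $\mathcal{O}_{\mathbb{B}^d}^\circ\cdot e_i\subseteq\mathcal{O}_X^\circ$; and since the $e_i$ form an $\mathcal{O}_{\mathbb{B}^d}$-basis of $\psi_*\mathcal{O}_X$, the left-hand term is moreover free, isomorphic to $(\mathcal{O}_{\mathbb{B}^d}^\circ)^{\oplus n}$. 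The inclusion $z^\alpha\psi_*\mathcal{O}_X^\circ\subseteq\bigoplus_i\mathcal{O}_{\mathbb{B}^d}^\circ e_i$ of the lemma says precisely that multiplication by $z^\alpha$ is the zero endomorphism of $\mathcal{C}$.

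Passing to cohomology and using $H^q(\mathbb{B}^d,(\mathcal{O}_{\mathbb{B}^d}^\circ)^{\oplus n})=H^q(\mathbb{B}^d,\mathcal{O}_{\mathbb{B}^d}^\circ)^{\oplus n}=0$ for all $q\geq 1$, the long exact sequence degenerates to isomorphisms $H^q(\mathbb{B}^d,\psi_*\mathcal{O}_X^\circ)\cong H^q(\mathbb{B}^d,\mathcal{C})$ for every $q\geq 1$. Since $z^\alpha$ annihilates $\mathcal{C}$ it annihilates $H^q(\mathbb{B}^d,\mathcal{C})$; feeding this back through the ($T_d^\circ$-linear) isomorphisms of the preceding two paragraphs and noting that $z\in T_d^\circ$ acts on $H^q(X,\mathcal{O}_X^\circ)$ as multiplication by $h=\psi(z)$, we obtain $h^\alpha H^q(X,\mathcal{O}_X^\circ)=0$ for all $q\geq 1$, which is the claim.

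The homological bookkeeping above is routine once the lemma is in hand; the single point that genuinely requires justification is the acyclicity of the finite pushforward, i.e. that $R^q\psi_*=0$ for $q\geq 1$ on arbitrary abelian sheaves, so that $H^q(X,-)$ may indeed be computed after pushing forward along $\psi$. This is where the finiteness of $\psi$ (and not merely its being affinoid) enters, and I would invoke it from the structure theory of finite morphisms of adic spaces rather than reprove it.
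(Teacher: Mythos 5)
Your first half — the short exact sequence $0\to(\mathcal{O}^\circ_{\mathbb{B}^d})^{\oplus n}\to\psi_*\mathcal{O}^\circ_X\to\mathcal{C}\to 0$, Bartenwerfer's vanishing $H^q(\mathbb{B}^d,\mathcal{O}^\circ_{\mathbb{B}^d})=0$, and the observation that $z^\alpha$ kills $\mathcal{C}$ and hence $H^q(\mathbb{B}^d,\psi_*\mathcal{O}^\circ_X)$ — is exactly the paper's argument. The gap is the step you yourself single out as ``the single point that genuinely requires justification'' and then propose to invoke rather than prove: that $R^q\psi_*\mathcal{F}=0$ for $q\geq 1$ and \emph{arbitrary} abelian sheaves $\mathcal{F}$, so that Leray gives $H^q(X,\mathcal{O}^\circ_X)\cong H^q(\mathbb{B}^d,\psi_*\mathcal{O}^\circ_X)$. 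There is no such quotable result. The acyclicity of finite morphisms is known for coherent $\mathcal{O}_X$-modules (Kiehl) and for the \'etale topology, but $\mathcal{O}^\circ_X$ is neither a coherent $\mathcal{O}_X$-module nor an overconvergent sheaf. What one can prove (De Jong--Van der Put, Theorem 2.7.4) is that the \emph{stalks} of $R^q\psi_*\mathcal{F}$ at analytic points are computed by the cohomology of the fibers, which indeed vanish for a finite map; but abelian sheaves on a rigid space are not determined by their stalks at analytic points (only overconvergent sheaves are), so this fiberwise vanishing does not sheafify into $R^q\psi_*\mathcal{F}=0$, and a \v{C}ech/Cartan--Leray argument fails for the same reason: the terms $H^q(\psi^{-1}(V),\mathcal{F})$ over rational subsets $V$ need not vanish — indeed their non-vanishing for $\mathcal{F}=\mathcal{O}^+$ is the very phenomenon the paper studies.

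The paper flags this explicitly (``we still cannot deduce directly that also the cohomology of $X$ is killed by a power of $\psi(z)$ because of the absence of a global base change argument for the cohomology''), and the entire second half of its proof is the workaround you are missing: it uses local cohomology together with Riemann's Hebbarkeitssatz to replace $X$ and $\mathbb{B}^d$ by the loci $\tilde X=X-Z$ and $\tilde{\mathbb{B}}^d=\mathbb{B}^d-V$ where $\psi$ is finite \'etale without changing the cohomology; it then applies De Jong--Van der Put's base change theorem at each analytic point $a$ to deduce that $H^q(X_a,\mathcal{O}^\circ_X|_{X_a})$ is killed by $z^\alpha$, decomposes the fibers using \'etaleness, and finally uses their Key Lemma (2.7.1) — valid because $\tilde X$ is quasi-compact and quasi-separated so that \v{C}ech cohomology computes sheaf cohomology — to pass from ``killed on every fiber'' back to ``killed globally.'' So your outline is not a shortcut with a citable missing reference; the missing step is the actual mathematical content of the second half of the proof. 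To repair your write-up you would either have to supply a genuine proof that finite morphisms of (non-overconvergent) abelian sheaves on rigid spaces are acyclic, or replace the Leray step by the local-cohomology/fiberwise argument above.
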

\noindent
The idea is to use the fact that the integral cohomology of the closed affinoid adic ball $\mathbb{B}^d$ is known and then transport such information to $X$. To be precise, the following result is due to Bartenwerfer (see \cite{Bar78}, \cite{Bar79}): 
\begin{proposition}
We have that $H^q (\mathbb{B}^d , \mathcal{O}^\circ_{\mathbb{B}^d })=0$ for all $q>0$. 
\end{proposition}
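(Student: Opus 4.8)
The plan is first to note that $\mathbb{B}^d=\text{Spa}(T_d,T_d^{\circ})$ carries the evident smooth formal model $\text{Spf}\,\mathbb{K}^{\circ}\langle x_1,\dots,x_d\rangle$, so the statement is the special case $X=\mathbb{B}^d$ of part (III) of Theorem~1.1; but since that part is exactly Bartenwerfer's theorem, I rather indicate a direct argument. Fix a pseudo-uniformizer $\varpi\in\mathfrak{m}$, $\varpi\neq 0$. The sheaf $\mathcal{O}^{\circ}_{\mathbb{B}^d}$ is $\varpi$-torsion-free, being a subsheaf of the sheaf of $\mathbb{K}$-algebras $\mathcal{O}_{\mathbb{B}^d}$, and one has $\mathcal{O}_{\mathbb{B}^d}=\mathcal{O}^{\circ}_{\mathbb{B}^d}[\tfrac{1}{\varpi}]=\varinjlim\bigl(\mathcal{O}^{\circ}_{\mathbb{B}^d}\xrightarrow{\varpi}\mathcal{O}^{\circ}_{\mathbb{B}^d}\xrightarrow{\varpi}\cdots\bigr)$; since $\mathbb{B}^d$ is a spectral, hence quasi-compact and quasi-separated, space, sheaf cohomology commutes with this filtered colimit, so $H^q(\mathbb{B}^d,\mathcal{O}^{\circ}_{\mathbb{B}^d})[\tfrac{1}{\varpi}]=H^q(\mathbb{B}^d,\mathcal{O}_{\mathbb{B}^d})$, which vanishes for $q\ge 1$ by the acyclicity of $\mathcal{O}_{\mathbb{B}^d}$ recalled in the introduction. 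Hence $H^q(\mathbb{B}^d,\mathcal{O}^{\circ}_{\mathbb{B}^d})$ is $\varpi$-power torsion for $q\ge 1$, and by feeding the exact sequences $0\to\mathcal{O}^{\circ}_{\mathbb{B}^d}\xrightarrow{\varpi^n}\mathcal{O}^{\circ}_{\mathbb{B}^d}\to\mathcal{O}^{\circ}_{\mathbb{B}^d}/\varpi^n\mathcal{O}^{\circ}_{\mathbb{B}^d}\to 0$ into the long exact cohomology sequence one sees that it is enough to prove, for every $n\ge 1$, that $R\Gamma\bigl(\mathbb{B}^d,\mathcal{O}^{\circ}_{\mathbb{B}^d}/\varpi^n\mathcal{O}^{\circ}_{\mathbb{B}^d}\bigr)$ is concentrated in degree $0$ with value $\mathbb{K}^{\circ}\langle x_1,\dots,x_d\rangle/\varpi^n$: the positive-degree vanishing then forces $H^{\ge 2}(\mathbb{B}^d,\mathcal{O}^{\circ}_{\mathbb{B}^d})$ to be $\varpi$-torsion-free, hence zero, while the degree-$0$ computation makes the map $H^0(\mathbb{B}^d,\mathcal{O}^{\circ}_{\mathbb{B}^d})\to H^0(\mathbb{B}^d,\mathcal{O}^{\circ}_{\mathbb{B}^d}/\varpi^n\mathcal{O}^{\circ}_{\mathbb{B}^d})$ surjective and so kills $H^1$ as well.

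For the surviving mod-$\varpi^n$ statement I would use the specialization map $\mathrm{sp}\colon\mathbb{B}^d\to\text{Spf}\,\mathbb{K}^{\circ}\langle x_1,\dots,x_d\rangle$, whose target has underlying space the affine scheme $\mathbb{A}^d$ over $\mathbb{K}^{\circ}/\varpi$. Cover $\mathbb{B}^d$ by the tubes $U_i=\mathbb{B}^d\langle 1/f_i\rangle=\{x:|f_i(x)|=1\}$ attached to finitely many $f_1,\dots,f_r\in\mathbb{K}^{\circ}\langle x_1,\dots,x_d\rangle$ whose reductions generate the unit ideal modulo $\varpi$; this is indeed a cover, because then $\sum_i g_i f_i$ is a unit of $\mathbb{K}^{\circ}\langle x\rangle$ for suitable $g_i$, whence $\bigl|\sum_i g_i(x)f_i(x)\bigr|=1$ and so $\max_i|f_i(x)|=1$ at every point. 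Each multiple intersection of the $U_i$ is again a tube, namely $\bigcap_{i\in S}U_i=\mathbb{B}^d\langle 1/\prod_{i\in S}f_i\rangle=\mathrm{sp}^{-1}\bigl(D(\prod_{i\in S}\bar f_i)\bigr)$, on which the integral sections are just $\mathbb{K}^{\circ}\langle x\rangle\langle 1/\prod_{i\in S}f_i\rangle$, no further normalization occurring because the reduction modulo $\varpi$ is a localization of $\mathbb{K}^{\circ}\langle x\rangle/\varpi$ (this is the step that uses smoothness of the formal model; in the non-discretely-valued case one needs in addition to approximate $\mathbb{K}$ here). Consequently the mod-$\varpi^n$ \v{C}ech complex of the cover $\{U_i\}$ is identified with the \v{C}ech complex of the structure sheaf of $\mathbb{A}^d$ over $\mathbb{K}^{\circ}/\varpi^n$ for the affine cover $\{D(\bar f_i)\}$, which is exact in positive degrees with $H^0=\mathbb{K}^{\circ}\langle x\rangle/\varpi^n$; to pass from this single cover to the sheaf cohomology of $\mathbb{B}^d$ one shows that the tubes $\mathrm{sp}^{-1}(D(\bar f))$ form a basis of opens over which $\mathcal{O}^{\circ}_{\mathbb{B}^d}/\varpi^n\mathcal{O}^{\circ}_{\mathbb{B}^d}$ is pulled back from a quasi-coherent sheaf, so that $R\mathrm{sp}_{*}\bigl(\mathcal{O}^{\circ}_{\mathbb{B}^d}/\varpi^n\mathcal{O}^{\circ}_{\mathbb{B}^d}\bigr)$ is the structure sheaf of $\mathbb{A}^d$ over $\mathbb{K}^{\circ}/\varpi^n$, and a Leray argument together with the affineness of $\mathbb{A}^d$ gives the required description of $R\Gamma$.

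The step I expect to be the main obstacle is precisely this last one — the control of $R\mathrm{sp}_{*}$ of the integral structure sheaf, equivalently the statement that a smooth formal model trivializes the higher cohomology of $\mathcal{O}^{\circ}/\varpi^n\mathcal{O}^{\circ}$. Smoothness of $\text{Spf}\,\mathbb{K}^{\circ}\langle x\rangle$ is indispensable here: the analogue is false for non-smooth formal models, in keeping with the fact that $\mathcal{O}^{\circ}$ need not be acyclic on a general (even smooth) affinoid. The two technical points to be pinned down with care are (a) that on a tube $\mathbb{B}^d\langle 1/f\rangle$ the ring of integral sections is exactly $\mathbb{K}^{\circ}\langle x\rangle\langle 1/f\rangle$ — that is, normality lifts from the regular reduction — and (b) that the tube of a point of $\mathbb{A}^d$ is cohomologically trivial for the relevant sheaf, so that $R\mathrm{sp}_{*}$ of a pulled-back sheaf is that sheaf itself.
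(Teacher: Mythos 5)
The paper offers no proof of this proposition --- it is quoted as a theorem of Bartenwerfer (\cite{Bar78}, \cite{Bar79}) --- so there is no internal argument to compare yours against; what follows assesses your sketch on its own terms. Your first paragraph is correct and clean: $\mathbb{B}^d$ is spectral, so cohomology commutes with the filtered colimit $\mathcal{O}_{\mathbb{B}^d}=\varinjlim(\mathcal{O}^\circ_{\mathbb{B}^d}\xrightarrow{\varpi}\mathcal{O}^\circ_{\mathbb{B}^d}\to\cdots)$, Tate--Kedlaya--Liu acyclicity then shows $H^q(\mathbb{B}^d,\mathcal{O}^\circ_{\mathbb{B}^d})$ is $\varpi$-power torsion for $q\ge 1$, and the long exact sequences for multiplication by $\varpi^n$ correctly reduce everything to showing that $R\Gamma(\mathbb{B}^d,\mathcal{O}^\circ_{\mathbb{B}^d}/\varpi^n)$ is concentrated in degree $0$ with the expected value.

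The gap is in the second paragraph, and it sits exactly where you suspect: the passage from the single \v{C}ech cover $\{U_i\}$ to sheaf cohomology. The tubes $\mathrm{sp}^{-1}(D(\bar f))$ do \emph{not} form a basis of the topology of the adic space $\mathbb{B}^d$ (for instance $\{|x_1|\le|\varpi|\}$ is not a union of such tubes), so the assertion that the sheaf is ``pulled back from a quasi-coherent sheaf on a basis'' does not yield $R^q\mathrm{sp}_*=0$ for $q>0$. The stalk of $R^q\mathrm{sp}_*(\mathcal{O}^\circ_{\mathbb{B}^d}/\varpi^n)$ at a point of the special fiber is $\varinjlim_{\bar f}H^q(\mathbb{B}^d\langle 1/f\rangle,\mathcal{O}^\circ/\varpi^n)$, and likewise the \v{C}ech-to-derived-functor spectral sequence for $\{U_i\}$ degenerates only if $H^q(U_{i_0\cdots i_p},\mathcal{O}^\circ/\varpi^n)$ vanishes for $q>0$ on all multiple intersections. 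These are statements of exactly the same type as the one being proved, with the affinoid tube $\mathbb{B}^d\langle 1/f\rangle$ in place of $\mathbb{B}^d$, so as written the argument is circular. (A smaller second issue: even the degree-zero identification $H^0(\mathbb{B}^d,\mathcal{O}^\circ/\varpi^n)\cong\mathbb{K}^\circ\langle x\rangle/\varpi^n$ needs an $H^1$-type input, since sections of a quotient sheaf can strictly contain the quotient of the sections.) Breaking this circularity is the real content of Bartenwerfer's theorem: one uses that \v{C}ech cohomology for finite coverings by rational subsets computes sheaf cohomology on these spaces (Prop.~2.5.4 in \cite{DJVP96}) and then proves, by direct computation with Tate algebras, that the integral \v{C}ech complex of \emph{every} finite rational (or Laurent) covering of $\mathbb{B}^d$ is acyclic in positive degrees --- not only the coverings induced from Zariski-opens of the special fiber. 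Your point (a), the identification $\mathcal{O}^\circ(\mathbb{B}^d\langle 1/f\rangle)=\mathbb{K}^\circ\langle x\rangle\langle 1/f\rangle$ via reducedness of the reduction, is correct but is the easier half; point (b) is the theorem itself and remains unproved in your sketch.
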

\noindent
We have an exact sequence of sheaves on $\mathbb{B}^d$:
$$0 \rightarrow (\mathcal{O}^\circ_{\mathbb{B}^d})^{\oplus n} \rightarrow \psi_* \mathcal{O}^\circ_X \rightarrow \psi_* \mathcal{O}^\circ_X / (\mathcal{O}^\circ_{\mathbb{B}^d})^{\oplus n} \rightarrow 0.$$
Consider the long exact sequence in cohomology: 
$$\rightarrow H^q (\mathbb{B}^d , (\mathcal{O}^\circ_{\mathbb{B}^d})^{\oplus n}) \rightarrow H^q (\mathbb{B}^d , \psi_* \mathcal{O}^\circ_X )\rightarrow H^q (\mathbb{B}^d , \psi_* \mathcal{O}^\circ_X / (\mathcal{O}^\circ_{\mathbb{B}^d})^{\oplus n}) \rightarrow H^{q+1} (\mathbb{B}^d , (\mathcal{O}^\circ_{\mathbb{B}^d})^{\oplus n}) )\rightarrow$$
Now the first and last term are zero for all $q\geq 1$ because of Bartenwerfer result $H^q (\mathbb{B}^d , \mathcal{O}^\circ_{\mathbb{B}^d})=0$ for all $q\geq 1$ and the fact that sheaf cohomology commutes with finite direct sums. Note that the term $H^q (\mathbb{B}^d , \psi_* \mathcal{O}^\circ_X / (\mathcal{O}^\circ_{\mathbb{B}^d})^{\oplus n}) $ is killed by multiplication-by $ z^\alpha$, hence the same holds for $H^q (\mathbb{B}^d , \psi_* \mathcal{O}^\circ_X )$. However, we still cannot deduce directly that also the cohomology of $X$ is killed by a power of $\psi(z):=h$ because of the absence of a global base change argument for the cohomology. We rely again on Riemann's Hebbarkeissatz via a computation using local cohomology (see Exp. I and II in \cite{Gro68}, Grothendieck's SGA2). For any abelian sheaf $\mathcal{F}$ over a space $Y$ and for $Z$ closed inside $Y$ define the local cohomology $H^q_Z (Y , \mathcal{F})$ of $\mathcal{F}$ at $Z$ as the derived functor of $\text{Ker}(\mathcal{F} (Y) \rightarrow \mathcal{F} (Y -Z))$. The local cohomology satisfy the following long exact sequence (see Cor. 2.9 in \cite{Gro68} or also \cite{Kis99}):

$$\rightarrow H^q_Z (Y, \mathcal{F}) \rightarrow H^q (Y , \mathcal{F}) \rightarrow H^q (Y-Z , \mathcal{F}) \rightarrow H^{q+1}_Z (Y , \mathcal{F})\rightarrow$$ 
\noindent
Now apply the above in our situation, i.e. $Y=\mathbb{B}^d$ and $\mathcal{F}=\psi_* \mathcal{O}_{\mathbb{B}^d}^\circ$ and $Z=V$ we get:

$$\rightarrow H^q_V (\mathbb{B}^d ,  \psi_* \mathcal{O}_X^\circ) \rightarrow H^q (\mathbb{B}^d ,  \psi_* \mathcal{O}_X^\circ) \rightarrow H^q (\mathbb{B}^d-V ,  \psi_* \mathcal{O}_X^\circ) \rightarrow H^{q+1}_V (\mathbb{B}^d ,  \psi_* \mathcal{O}_X^\circ)\rightarrow$$ 
\noindent
Since $\psi : X \rightarrow \mathbb{B}^d$ is surjective and since by Riemann's Hebbarkeissatz $j_* \mathcal{O}^\circ_{X-Z} \cong \mathcal{O}_X^\circ$, we deduce that $H^q_V (\mathbb{B}^d , \psi_* \mathcal{O}_X^\circ )=0$ for all $q\geq 1$. Hence, we deduce that $H^q (\mathbb{B}^d , \psi_* \mathcal{O}_X^\circ )\cong H^q (\mathbb{B}^d -V , \psi_* \mathcal{O}_X^\circ )$. In a similar way for $X$, we deduce that $H^q (X , \mathcal{O}_X^\circ )\cong H^q (X -Z ,  \mathcal{O}_X^\circ )$.\\
To slightly simplify the notation, denote $\tilde{X} := X-Z$ and $\tilde{\mathbb{B}}^d := \mathbb{B}^d -V$. Note that $\tilde{X}$ and $\tilde{\mathbb{B}}^d$ are quasi-compact and quasi-separated analytic spaces but not affinoid. In particular, thanks to Prop. 2.5.4 of De Jong and Van der Put in \cite{DJVP96}, we can compute their sheaf cohomology via the Cech cohomology (property that holds in more general cases, see \cite{DJVP96}). From the above discussion, we know that $H^q (\tilde{\mathbb{B}}^d , \psi_* \mathcal{O}_X^\circ )$ is uniformly torsion for all $q \geq 1$. In order to relate the torsion information on these cohomology groups to torsion informations on the cohomology of $\tilde{X}$ we now rely on the full power of the following base change theorem for rigid analytic spaces of De Jong and Van der Put (see Theorem 2.7.4 in \cite{DJVP96}):

\begin{theorem}\label{stalk}
Let $f : X \rightarrow Y$ be a quasi-compact morphism of rigid analytic varieties. Take any analytic point $a\in Y$ and denote by $X_a$ the fiber of $f$ over $a$ (defined as fiber product). The functors $S \mapsto H^n (X_a , S|_{X_a})$ (resp. $S\mapsto (R^n f_* S )_a $) on the category of Abelian Sheaves on $X$ form a $\delta$-functor. These $\delta$-functors are isomorphic:
$$(R^n f_* S )_a \cong H^n (X_a , S|_{X_a}) \text{ for any Abelian Sheaf }S\text{ on }X.$$ 
\end{theorem}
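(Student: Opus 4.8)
Since Theorem \ref{stalk} is quoted essentially verbatim as Theorem 2.7.4 in \cite{DJVP96}, the economical option is to invoke it directly; to give an actual proof proposal, I would run the standard formalism of universal $\delta$-functors, the only genuinely geometric ingredient being a presentation of the fibre $X_a$ as a cofiltered limit. The plan is first to realise $X_a = \varprojlim_V f^{-1}(V)$, where $V$ ranges over the cofiltered system of affinoid rational neighbourhoods of the analytic point $a$ in $Y$; here the assumption that $f$ is quasi-compact is precisely what guarantees that each $f^{-1}(V)$ is quasi-compact and quasi-separated, so that one is looking at a cofiltered limit of spectral spaces along spectral transition maps. The analytic input to be quoted is then that sheaf cohomology commutes with such limits: for a cofiltered limit $T = \varprojlim_i T_i$ of quasi-compact and quasi-separated spaces along spectral maps and a sheaf $\mathcal{G}$ on $T$ arising as the pullback of a compatible system $(\mathcal{G}_i)$, one has $H^n(T,\mathcal{G}) \cong \varinjlim_i H^n(T_i,\mathcal{G}_i)$.

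I would then package the comparison as an isomorphism of $\delta$-functors in the abelian sheaf $S$ on $X$. Both $S \mapsto (R^n f_* S)_a$ and $S \mapsto H^n(X_a, S|_{X_a})$ are cohomological $\delta$-functors: the category of abelian sheaves on $X$ has enough injectives, the stalk functor $(-)_a$ and the restriction-to-the-fibre functor $(-)|_{X_a}$ are exact, and deriving these produces the $\delta$-structure. The two $\delta$-functors agree in degree $0$ by the elementary case of the limit presentation, namely $(f_* S)_a = \varinjlim_V S(f^{-1}(V)) = H^0(X_a, S|_{X_a})$. They are moreover effaceable in positive degrees: for $S$ injective, $R^n f_* S = 0$ for $n \geq 1$ disposes of the right-hand functor, while $S|_{f^{-1}(V)}$ is again injective (restriction of an injective sheaf along an open immersion is injective, since $j_!$ is an exact left adjoint of $j^{*}$), so the limit formula gives $H^n(X_a, S|_{X_a}) = \varinjlim_V H^n(f^{-1}(V), S|_{f^{-1}(V)}) = 0$ for $n \geq 1$. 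Hence both are universal $\delta$-functors extending the same functor in degree $0$, and the canonical comparison morphism is an isomorphism in every degree, which is the assertion.

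The step I expect to be the \emph{main obstacle} is making the presentation $X_a = \varprojlim_V f^{-1}(V)$ and the commutation of cohomology with it fully rigorous inside the foundational framework for rigid-analytic varieties adopted in \cite{DJVP96}, that is, transporting the familiar spectral-space arguments (in the style of SGA~4, Exp.~VI, or equivalently Huber's adic-space formalism from \cite{Hub94}) to the analytic points and fibre products used there; everything else is formal. In the sequel this theorem is exactly what will be applied to $\psi$ over the \'etale locus $\tilde{\mathbb{B}}^d$: there the fibres of $\psi$ are finite and discrete, so $R^q \psi_* S$ vanishes for $q \geq 1$, and a Leray spectral sequence then identifies $H^q(\tilde{X}, \mathcal{O}_X^\circ)$ with $H^q(\tilde{\mathbb{B}}^d, \psi_* \mathcal{O}_X^\circ)$, allowing the uniform torsion established on $\mathbb{B}^d$ to be transported to $X$.
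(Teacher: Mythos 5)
The paper offers no proof of this statement: it is imported verbatim as Theorem 2.7.4 of \cite{DJVP96}, so your first option --- simply citing it --- is exactly what the paper does, and your sketch is a faithful outline of the standard argument (present $X_a$ as a cofiltered limit of the $f^{-1}(V)$ over neighbourhoods $V$ of the analytic point $a$ and commute cohomology with that limit, which in \cite{DJVP96} is carried by their Key Lemma 2.7.1). One remark: once you grant the limit formula $H^n(X_a, S|_{X_a}) \cong \varinjlim_V H^n(f^{-1}(V), S)$ in all degrees --- which your effaceability step already invokes, since the pullback of an injective sheaf along $\omega: X_a \to X$ (not an open immersion) need not be injective --- the theorem follows at once because the right-hand side computes $(R^n f_* S)_a$ by definition of the stalk at an analytic point, so the $\delta$-functor packaging is decorative; the genuine content, as you correctly flag, is justifying that limit formula within the $G$-topology framework and de Jong--van der Put's notion of analytic points.
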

\noindent
Recall that $S|_{X_a}$ simply denotes the pull back sheaf $\omega^* S$ via the natural projection morphism $\omega : X_a \rightarrow X$. 
Note indeed that while this theorem is particularly useful when we deal with overconvergent sheaf (which in some sense are determined by their stalks, see property 6 in Lemma 2.3.2 in \cite{DJVP96}), it holds for general abelian sheaves, which serves our purposes. 
Moreover, note the important point that if in the above theorem the abelian sheaf $S$ is in particular a $\mathcal{O}_Y$-module (resp. $\mathcal{O}_Y^+$-module) then both functors $(R^n f_* \bullet )_a$ and $H^n (X_a , \bullet|_{X_a})$ are naturally functors of $\mathcal{O}_{X, a}$-modules (resp. $\mathcal{O}_{X, a}^+$-module) for each fixed analytic point $a$ in $Y$ and the isomorphism holds in that category. \\
We now apply the above theorem to our situation, i.e. $X=\tilde{X}$, $Y=\tilde{\mathbb{B}}^d$, $f=\psi : \tilde{X}\rightarrow \tilde{\mathbb{B}}^d$ which is finite and \'etale and $\mathcal{F}=\mathcal{O}_{\tilde{X}}^\circ$ seen as functor of $A^\circ$-modules (note that this comes from the restriction maps and Riemann's Hebbarkeissatz which grants that $j_* \mathcal{O}^\circ_{\tilde{X}} \cong \mathcal{O}^\circ_X$). Since $H^q (\tilde{\mathbb{B}}^d , \psi_* \mathcal{O}_X^\circ )$ is killed by $z^\alpha$, we have that for any analytic point $a\in \tilde{\mathbb{B}}^d$ the $\mathcal{O}_{\tilde{\mathbb{B}}^d , a}$-modules $(R^q f_* \mathcal{O}_X^\circ )_a $ are also killed by $z^\alpha$ because the cohomology of $\tilde{\mathbb{B}}^d$ is isomorphic to its Cech cohomology. By Theorem \ref{stalk}, we deduce that $H^q (X_a , {\mathcal{O}_X^\circ}|_{X_a}) $ is also killed by $z^\alpha$. Now, since the morphism $\psi : \tilde{X}\rightarrow \tilde{\mathbb{B}}^d$ is finite and \'etale we can decompose $X_a$ as a disjoint union of $X_{b_i}$ for the analytic points $b_i \in X$ such that $\psi(b_i )=a$ for $i=1, \dots, n$. We deduce that for all analytic points $c\in X$ we have that $H^q (X_c , \mathcal{O}^\circ_{X} |_{X_c} )$ is killed by $h^\alpha$ (with $\psi(z)=h$, note that $\psi$ is injective on global sections). As observed before, the cohomology of $\tilde{X}$ is isomorphic to the Cech cohomolgy and hence we deduce that $H^q (\tilde{X}, \mathcal{O}_X^\circ)$ is killed by $h^\alpha$ thanks to the Key Lemma of De Jong and Van der Put (see Lemma 2.7.1 in \cite{DJVP96}). Finally, thanks to the Riemann's Hebbarkeitssatz, we conclude that $H^q (X , \mathcal{O}_X^\circ )\cong H^q (\tilde{X} ,  \mathcal{O}_X^\circ )$ is killed by $h^\alpha$.\\ 
The final step consists of proving that there exists $N\in \mathbb{Z}_{\geq 1} $ such that $\mathfrak{m}^N$ kills $H^q (X , \mathcal{O}_X^\circ )$. Consider the ideal $J$ inside $A^\circ:=\mathcal{O}^\circ_X (X)$ generated by $h^\alpha$. Since the sets of the form $c A^\circ $ with $0\not=c \in \mathbb{K}^\circ$ form a fundamental system of neighborhood for the topology of $A$, we know that there exists a $N\in\mathbb{Z}_{\geq 1}$ such that $(\pi^N ) \subset (h^\alpha )$, where $\pi$ is a fixed pseudouniformizer for $\mathbb{K}$. This implies that there exists an $f\in \mathcal{O}_{X}^\circ$ such that $\pi^N = f \cdot h^\alpha$ which allows us to conclude that  $H^q (X , \mathcal{O}_X^\circ )$ is killed by $\pi^N$ and it is also uniformly torsion in the sense of Hansen and Kedlaya (see \cite{HK25}). 
\end{section}

 
\printbibliography[heading=bibintoc,title={References}]

\end{document}